\newtheorem{proposition}{Proposition}
\newtheorem{theorem}{Theorem}
\title{\LARGE \bf
A Robust Mean-field Game of Boltzmann-Vlasov-like Traffic Flow
}
\author{Amoolya Tirumalai$^{1}$ and John S. Baras$^{1}$
\thanks{*This work was supported in part by US Office of Naval Research (ONR) Grant No. N00014-17-1-2622.}
\thanks{$^{1}$A. Tirumalai and J.S. Baras are with the Department of Electrical and Computer Engineering and the 
Institute of Systems Research at the University of Maryland, 8223 Paint Branch Dr, College Park, MD 20740, USA.
        Email: {\tt\small \{ast256, baras\}@umd.edu}.}%
}
\begin{document}

\maketitle
\thispagestyle{empty}
\pagestyle{empty}

\begin{abstract}

Historically, traffic modelling approaches have taken either a particle-like (microscopic) approach, or a gas-like (meso- or macroscopic) approach. Until recently with the introduction of mean-field games to 
the controls community, there has not been a rigorous framework to facilitate passage between controls for the microscopic models and the macroscopic models. We begin this work with a particle-based model of autonomous vehicles subject to 
drag and unknown disturbances, noise, and a speed limit in addition to the control. 

 We formulate a robust stochastic differential game on the particles. We pass formally to the infinite-particle limit to obtain a robust mean-field game PDE system. We solve the mean-field game PDE system numerically and discuss the results. In particular, we obtain an optimal control which increases the bulk velocity of the traffic flow while reducing congestion.

\end{abstract}

\section{Introduction}
Models of traffic flow were developed first in the 1950s with the Lighthill-Whitham model \cite{lighthill1955kinematic}, which is generally a non-linear hyperbolic PDE for the spatial vehicle density. Similarly to the canonical Burgers' equation of fluid dynamics, the Lighthill-Whitham model exhibits phenomena of shock wave formation corresponding to the formation of traffic jams. This is a variety of macroscopic model.

Generally, these models are constructed from macroscopic conservation laws. While these methods are convenient and capture much of the fundamental qualties of the traffic flow, starting and ending with macroscopic models of traffic flow makes it rather difficult to map controls back to the vehicles.

To solve this problem, we begin with microscopic agent dynamics.  The particular model chosen is that of a particle subject to fluid drag, control, disturbance, and random excitations. A speed limit, actuation constraints, and disturbance constraints are imposed.  

A robust stochastic differential game is formulated on these particle dynamics. The cost for this game accounts for passenger comfort and dissipation of congestion at the highest velocity possible. In particular, the congestion term depends on the empirical measure of the $N$-driver system. Some inspiration is taken from \cite{chevalier2015micro}, but we have expanded the problem therein to include noise and robustness, as well as higher-order dynamics.

We (formally as opposed to rigorously) pass to an infinite-driver limit of this problem, and obtain a mean-field game. Via stochastic dynamic programming, the robust mean-field game is solved by a forward Kolmogorov equation and a backward Hamilton-Jacobi-Bellman-Isaacs equation. In fact, the forward Kolmogorov equation we obtain corresponds to a Boltzmann-Vlasov-like equation. Similar equations were developed for traffic flow modelling in the 1960s \cite{prigogine1960boltzmann}. For the rest of the paper, we present numerical methods to solve this backward-forward system of PDEs, and a numerical example. 
\section{Notation}
In this paper, we define $\mathbb T:= \mathbb R / 200\pi \mathbb Z$, which is the standard circle streched from $[0,2\pi)$ to $[0,200 \pi)$. For $(x_1, x_2, x_3) \in \mathbb T \times \mathbb R \times \mathbb R^+_0$, 
$\partial_{x_i}(\cdot)$ denotes partial differentiation w.r.t. the subscripted variable, and $\nabla_{(x_i, x_j)}(\cdot)$ represents the gradient w.r.t. the given variables. $\nabla_{(x_i,x_j)}\cdot (\cdot) $ is the divergence w.r.t. those variables. For a Polish space $X$, $\mathcal B(X)$ is the Borel $\sigma$-algebra associated to that space, and $\{ X, \mathcal B(X) \}$ the associated Borel space. $\mathcal P(X)$ is the space of all probability distributions (measures) on $X$. For $A \in \mathcal B(X)$, $\mathbb I_A(\cdot)$ is the indicator function of $A$, defined in the usual way. Given a filtration $\mathcal F(t) \subset \mathcal B(X)$, $L^2_{\mathcal F}(0,T;L^2(\Omega; X))$ is the set of square-integrable random processes taking values in Polish space $X$ which are also adapted to the filtration $\mathcal F(\cdot)$ at each time $t \in [0,T]$.
\section{Problem Formulation}

Take a standard Borel space $\{\Omega, \mathcal B(\Omega) \}$.
Consider a sequence of agents' positions-velocity pairs: $\{(x^i_t, v^i_t )\}_{i=1}^N \subset 
(\mathbb T \times [0,s_{max}])$, with $(x_{(\cdot)}^i, v_{(\cdot)}^i):\Omega \times [0,T]\rightarrow \mathbb T \times [0,s_{max}]$. Take these to follow the reflected It\^o SDEs (resulting in a degenerate reflected diffusion process):
\begin{equation}
\begin{split}
dx_t^i &= v_t^i dt \\ 
dv_t^i &= (-(v_t^{i})^2 \alpha + w^i(t)) dt + dr_t^i + \sqrt{2\epsilon}dW_t^i\\
&(x^i_t,v^i_t)|_{t = 0} = (x_0, v_0) \in L^2(\Omega;\mathbb T \times [0,s_{max}]).
\end{split} 
\end{equation}
$\alpha > 0$ is a parameter combining the drag coefficient, air density, vehicle cross sectional area, and vehicle mass. Assume that the autonomous vehicles are of a similar class, so we apply a common $\alpha$. $W^i_{(\cdot)}$ is the standard Wiener process \cite{oksendal2010stochastic}, of which we have $N$ independent copies. Note that the dynamics for the agents are independent of each other. Let $\mathcal F^i(t):= \sigma((x_0,v_0,W^i_s)  | s \leq t),$ which is the filtration generated by the initial conditions $(x_0, v_0)$ and the Wiener process $W^i_{(\cdot)}$. $r_t^i \in L^2_{\mathcal F^i}(0,T;L^2(\Omega; \mathbb R))$ is a regulator process whose increment is:
$$
dr_t^i = u^i(t)dt + \nu(v^i_t)dl_t^i,
$$
where $\nu(0) = 1, \nu(s_{max}) = -1, \nu(v) = 0$ otherwise on $[0,s_{max}]$. This is the inward pointing direction. $l_t^i \in L^2_{\mathcal F^i}(0,T;L^2(\Omega; \mathbb R))$ is a surely non-decreasing jump process (see \cite{tanaka1979stochastic, lions1984stochastic, pilipenko2014introduction} for details) with $l_{t=0}^i = 0$, and
$$
\int_0^T \mathbb I_{(0,s_{max})}(v_t^i) dl_t^i = 0,
$$
i.e. jumps only occur when the process $v_t^i$ is on the boundary of $[0,s_{max}]$. This has the effect of enforcing the speed limit. $u^i \in \mathcal U :=
PWC([0,T];[u_{min},u_{max}])$ is the acceleration or brake input supplied by the controller, and $w^i \in \mathcal W :=
PWC([0,T];[-w_{max},w_{max}])$ is a bounded disturbance representing external load. Suppose $$w_{max} < \min(\{|u_{min}|,u_{max} \}), u_{min} < 0 < u_{max}.$$

By the given assumptions, invoking the classical results of Tanaka \cite{tanaka1979stochastic}, there is a unique strong solution $(x_t^i, v_t^i) \in L^2_{\mathcal F^i}(0,T;L^2(\Omega; \mathbb T \times [0,s_{max}]))$. Indeed, it is actually $L^4_{\mathcal F^i}(\Omega;C([0,T];\mathbb T \times [0,s_{max}]))$ following the result of Lions and Sznitman \cite{lions1984stochastic}, which is considerably stronger.

Define the empirical measures (random measures) of sequences $\{x^j_t, v^j_t \}_{j=1\neq i}^N$ given by $\mu_{(\cdot)}^{N,i} : \Omega \times [0,T] \rightarrow 
\mathcal P(\mathbb T \times [0,s_{max}])$ (omitting dependence on $\omega \in \Omega$):
$$
\mu_t^{N,i}(A):= \frac{1}{N-1}\sum_{j=1\neq i}^N \delta_{(x_t^j,v_t^j)}(A), 
A \in \mathcal B(\mathbb T \times [0,s_{max}])
$$
where $\delta_{(\cdot)}$ is the standard Dirac measure on $\mathbb T \times [0,s_{\max}]$. Let $x_t = (x_t^1, ..., x_t^N)^\top, v_t = (v_t^1, ..., 
v_t^N)^\top$, and denote the exclusion of the $i$-th entries of each of these vectors by $(x_t^{-i},v_t^{-i})$. Let the controls taken by these agents be $u^{-i}$, and their associated disturbances be $w^{-i}$. Assume $w^i$ is unknown to the controller. Suppose that the optimal control taken by the `$-i$' agents exists and is $\hat{u}^{-i}$, and that the worst-case disturbance for the `$-i$' agents exists and is $\hat{w}^{-i}$.

For each agent, define the following robust optimal control problems (a differential game \cite{basar2018handbook}):
\begin{equation}
\begin{split}
\min_{u^i \in \mathcal U} \max_{w^i \in \mathcal W}&\text{ }   \mathbb E \Big [ \mathcal J_i[u^i,w^i, \hat{u}^{-i}, \hat{w}^{-i};(x_{t=0}, v_{t=0}), 0, T] \Big ], \\ 
&\text{s.t.} \text{ } (1).
\end{split}
\end{equation}

 The cost functional is:
\begin{equation*}
\begin{split}
\mathcal J_i[u^i,w^i, \hat{u}^{-i}, \hat{w}^{-i};(x_{t}, v_{t}), t, T]:= ... \\
\int_{t}^T \mathcal L_i(x_s, v^i_s, u^i(s), w^i(s))ds,
\end{split}
\end{equation*}
where
\begin{equation*}
\begin{split}
&\mathcal L_i(x,v^i, u^i,w^i):= \frac{1}{2}(u^i)^2 -  \frac{1}{2\gamma^2} (w^i)^2 + ... \\ &(\sum_{j=1\neq i}^N \phi(x^i,x^j)\mu_t^{N,i}(x_j, v_j) - \beta^{-1})v^i.
\end{split}
\end{equation*}

The quadratic term in $u^i$ is included to represent ride comfort, the $w^i$ term is included to reward the controller for resisting the disturbance, the term containing the empirical measure and $\phi \in C_b(\mathbb T^2;\mathbb R)$ represents an aversion to congestion, and the inclusion of the multiplication of $v$ by $\beta^{-1}$ enforces a preference to move as quickly as possible. 

We could apply dynamic programming to each agent's optimal control problem, but this would lead to a system of coupled dynamic programming  equations, and, assuming that the solution to such a system exists, this would lead to a fully centralized optimal control \cite{basar2018handbook, friedman1972stochastic}. 
\section{Mean-field Game}
To circumvent such a scenario, we'll formulate a mean-field game \cite{bensoussan2013mean}. Assume that there is some 
$m(\cdot) \in C([0,T];\mathcal P(\mathbb T \times \mathbb [0,s_{max}]))$ s.t. for each $t \in [0,T]$, $i=1,...,N$, $\mu_t^{N,i} \rightarrow m(t)$ weakly$^*$ in 
$\mathcal P(\mathbb T \times [0,s_{max}])$, which is the mean-field measure. Exclude a single agent from the mean-field. This (anonymous) agent follows the It\^o SDE:
\begin{equation}
\begin{split}
dX_t &= V_t dt \\ 
dV_t &= [-(V_t)^2 \alpha +  w(t)]dt + dr_t + \sqrt{2\epsilon}dW_t\\
&(X_t,V_t)|_{t = 0} = (x_0, v_0) \in L^2(\Omega;\mathbb T \times [0,s_{max}]).
\end{split} 
\end{equation}
where $r_t$ is of the same form as $r_t^i$, but we have dropped the indexing.
Assume that $u,w$ are of the given class so there is a unique $L^2_{\mathcal F}(0,T;L^2(\Omega; \mathbb T \times [0,s_{max}]))$ solution, where $\mathcal F(t):= \sigma((x_0,v_0,W_s)  | s \leq t)$ is the filtration generated by the initial conditions and the anonymized scalar Wiener process. 
Suppose the optimal control followed by the exogenous agents is $u^*$ and the worst-case disturbance they are subject to is $w^*$. The exogenous (anonymized) agents each follow copies of the It\^o SDE:
\begin{equation}
\begin{split}
d\Xi_t &= \Upsilon_t dt \\ 
d\Upsilon_t &= [-(\Upsilon_t)^2 \alpha + w^*(\Xi_t, \Upsilon_t,t)]dt + dR_t + \sqrt{2\epsilon} dB_t \\
&(\Xi_t,\Upsilon_t)|_{t = 0} = (x_0, v_0) \in L^2(\Omega;\mathbb T \times [0,s_{max}]).
\end{split} 
\end{equation}
where $B_{(\cdot)}$ is an anonymized scalar Wiener process, and:
$$
dR_t = u^*(\Xi_t, \Upsilon_t, t)dt + \nu(\Upsilon_t) dL_t,
$$
where $L_t$ is a jump process that has the same properties as $l_t, l_t^i$ taken w.r.t. $\Upsilon_t$. Let
$\mathcal F^*(t):= \sigma((x_0,v_0,B_s)  | s \leq t).$
Assume that $u^*, w^*$ exist and are each $\text{Lip}(\mathbb T \times [0,v_{max}];\mathbb R)$. Again, via the result of Tanaka \cite{tanaka1979stochastic}, there is a unique strong solution $(\Xi_{(\cdot)}, \Upsilon_{(\cdot)}) \in L^2_{\mathcal F^*}(0,T;L^2(\Omega; \mathbb T \times [0,s_{max}]))$, and the stronger result of Lions-Sznitman \cite{lions1984stochastic} also applies.

Let $m(t)$ be the distribution (law) of $(\Xi_t, \Upsilon_t)$. We can show that $m$ satisfies the (degenerate) forward Kolmogorov (FK) equation (in the sense of measures) over $D^m_T:= (0,T] 
\times \mathbb T \times [0,s_{max}]$:
\begin{equation}
\begin{split}
\label{measures}
&\partial_t m + \nabla_{\xi,\upsilon} \cdot [(\upsilon, -\alpha \upsilon^2 + u^*+ w^*)  m ] =  \epsilon \partial_\upsilon^2 m, \\
& m(0) = m_0.
\end{split} 
\end{equation}
We can recognize this as a Boltzmann-Vlasov-type kinetic equation with diffusion in the velocity \cite{neunzert1984introduction}, albeit a measure-valued version.

Since we have assumed $\mu_t^{N,i} \rightarrow m(t)$ weakly$^*$ in 
$\mathcal P(\mathbb T \times [0,s_{max}])$ for each $t \in [0,T]$ and each $i \in \{1,...,N \}$, $\phi \in C_b(\mathbb T^2;\mathbb R)$, and $\mathbb T$ is compact Hausdorff, the cost functional $\mathcal L_i 
\rightarrow \mathcal L$, and:
\begin{equation*}
\begin{split}
&\Lambda(\xi,\upsilon,u,w, m) := 
\mathcal L(\xi,\upsilon, u,w):= \frac{1}{2}u^2 - \frac{1}{2\gamma^2}w^2 + ... \\ 
&(\int_{\mathbb T \times [0,v_{max}]} \phi(\xi,s) dm(s,w)(t) - \beta^{-1})\upsilon .
\end{split}
\end{equation*}
We now have a mean-field game:
\begin{equation}
\begin{split}
\min_{u \in \mathcal U} \max_{w \in \mathcal W}&\text{ }   \mathbb E \Big [ \mathcal J[u,w, {u}^{*}, {w}^{*};(X_{t=0}, V_{t=0}), t=0, T] \Big ] \\ 
&\text{s.t.} \text{ } (3), \\
\end{split}
\end{equation}
with
\begin{equation}
\begin{split}
\mathcal J[u,w, {u}^{*}, {w}^{*};(X_{t}, V_{t}), t,T]:= ... \\
\int_{t}^T \Lambda(X_s,V_s, u(s), w(s), m(s)) ds.
\end{split}
\end{equation}
\begin{proposition}
Let $D_T^{\mathcal V}:= [0,T) \times \mathbb T \times [0, s_{max}]$, 
$$\mathbf U:= [u_{min},u_{max}], \mathbf W:=[-w_{max},w_{max}].$$
Define the pre-Hamiltonian:
$$
\mathcal H(\xi, \upsilon, u, w, m,p):= \Lambda(\xi, \upsilon, u, w, m) + p^\top g(\xi, \upsilon, u, w),
$$
where $g(\xi, \upsilon, u, w):= (\upsilon, -\alpha \upsilon^2 + u + w)^\top$.
The value function and optimal measure  $(\mathcal V, m)$ corresponding to the solution of the given robust mean-field game satisfy the system of PDEs (suppressing some arguments):
\begin{equation}
\begin{split}
&\partial_t \mathcal V + H(\cdots, m, \nabla_{\xi,\upsilon}\mathcal V) + \epsilon \partial_{\upsilon}^2 \mathcal V = 0 \text{ in } D_T^{\mathcal V}; \\
&H(\xi,\upsilon,m,p):=  \min_{u \in \mathbf U} \max_{w \in \mathbf W } \mathcal H(\xi, \upsilon, u, w, m,p); \\
& \partial_t m+ \nabla_{\xi,\upsilon} \cdot \Big [(\upsilon, -\alpha \upsilon^2 + u^*+ w^*) m \Big ] = \epsilon \partial_\upsilon^2 m \text{ in } D_T^m ;\\ 
& w^*(\xi,\upsilon,u) = \arg \max_{w \in \mathbf W} \mathcal H(\xi, \upsilon, u, w, m,\nabla_{\xi,\upsilon}\mathcal V) ; \\ 
&u^*(\xi,\upsilon)  = \arg \min_{u \in \mathbf U} \mathcal H(\xi, \upsilon, u, w^*(\cdots,u), m,\nabla_{\xi,\upsilon}\mathcal V); \\
& \mathcal V (t,\xi,\upsilon) = \mathcal V (t,\xi + 200\pi,\upsilon) \\
& \partial_\upsilon \mathcal V (t,\xi,0) = \partial_\upsilon \mathcal V (t,\xi,s_{max}) = 0 \\
&\mathcal V(T,\cdot) = 0, m(0) = m_0 \in \mathcal P(\mathbb T \times [0, s_{max}]).
\end{split}
\end{equation}
assuming a solution exists and is regular-enough.
\end{proposition}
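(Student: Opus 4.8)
The plan is to decouple the game into a standard robust stochastic optimal control problem for the representative agent by freezing the mean-field measure $m$, and then to reconstruct the coupling through a consistency (fixed-point) condition. For a fixed $m \in C([0,T];\mathcal{P}(\mathbb{T}\times[0,s_{max}]))$ the running cost $\Lambda$ depends on the trajectory only through the state, so the representative agent (3) faces a well-posed min-max problem, and I would define the value function
$$
\mathcal{V}(t,\xi,\upsilon) := \min_{u\in\mathcal{U}}\max_{w\in\mathcal{W}} \mathbb{E}\Big[\mathcal{J}[u,w,u^*,w^*;(X_t,V_t)=(\xi,\upsilon),t,T]\Big].
$$
Since $\mathcal{J}$ carries no terminal cost, evaluating at $t=T$ gives $\mathcal{V}(T,\cdot)=0$ immediately.

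Next I would invoke the dynamic programming principle on an interval $[t,t+h]$, writing $\mathcal{V}(t,\xi,\upsilon)=\min_u\max_w \mathbb{E}[\int_t^{t+h}\Lambda\,ds + \mathcal{V}(t+h,X_{t+h},V_{t+h})]$, and expand the last term via the It\^o formula for the reflected diffusion (3). The uncontrolled generator is $\upsilon\partial_\xi + (-\alpha\upsilon^2)\partial_\upsilon + \epsilon\partial_\upsilon^2$, the controlled drift contributes $(u+w)\partial_\upsilon\mathcal{V}$, and the regulator increment $\nu(\Upsilon)dL$ produces a boundary (local-time) term $\int \nu(\upsilon)\partial_\upsilon\mathcal{V}\,dL$. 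Because $L$ increases only on $\{\upsilon\in\{0,s_{max}\}\}$ with $\nu=\pm 1$ there, requiring this term to vanish forces the Neumann conditions $\partial_\upsilon\mathcal{V}(t,\xi,0)=\partial_\upsilon\mathcal{V}(t,\xi,s_{max})=0$, while the identification $\mathbb{T}=\mathbb{R}/200\pi\mathbb{Z}$ supplies periodicity in $\xi$. Dividing by $h$ and letting $h\to 0$, then using $p=\nabla_{\xi,\upsilon}\mathcal{V}$ and $\Lambda + p^\top g = \mathcal{H}$, and pulling the input-independent term $\epsilon\partial_\upsilon^2\mathcal{V}$ out of the optimization, yields the HJBI equation $\partial_t\mathcal{V} + H(\cdots,m,\nabla_{\xi,\upsilon}\mathcal{V}) + \epsilon\partial_\upsilon^2\mathcal{V}=0$ on $D_T^{\mathcal{V}}$.

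To pin down the optimizers I would exploit the separable structure of $\mathcal{H}$ in the inputs: it is strictly convex in $u$ with coefficient $\tfrac12$ and strictly concave in $w$ with coefficient $-\tfrac{1}{2\gamma^2}$, the inputs coupling only through $p_2=\partial_\upsilon\mathcal{V}$. This separability furnishes the Isaacs condition $\min_u\max_w = \max_w\min_u$, so the nested order in the statement is unambiguous; differentiating and projecting onto the compact sets $\mathbf{U},\mathbf{W}$ yields $w^*(\xi,\upsilon,u)$ and $u^*(\xi,\upsilon)$ as the clamped stationary points. I would then close the mean-field loop: the hypothesis $\mu_t^{N,i}\to m(t)$ weakly$^*$ with $\phi\in C_b$ and $\mathbb{T}$ compact already gives $\mathcal{L}_i\to\mathcal{L}$, so $m$ must be the law of the representative state driven by $(u^*,w^*)$, and by the forward Kolmogorov equation (5) established earlier this law satisfies $\partial_t m + \nabla_{\xi,\upsilon}\cdot[(\upsilon,-\alpha\upsilon^2+u^*+w^*)m]=\epsilon\partial_\upsilon^2 m$ on $D_T^m$ with $m(0)=m_0$, completing the system.

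I expect the main obstacle to be the rigorous treatment of the local-time boundary term in the It\^o expansion: showing that the reflected regulator contributes exactly the Neumann data and leaves no uncontrolled residual requires the It\^o--Tanaka formula for the Lions--Sznitman solution together with the occupation identity $\int_0^T\mathbb{I}_{(0,s_{max})}(\upsilon)\,dL=0$. Every remaining step is the standard verification argument carried out under the stated regularity hypothesis.
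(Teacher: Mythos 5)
Your proposal is correct and follows essentially the same route as the paper: stochastic dynamic programming for the representative agent with the measure $m$ frozen (the paper cites Watanabe's version of It\^o's lemma for the reflected diffusion, which is exactly the local-time argument you spell out for the Neumann conditions), explicit computation of $u^*$ and $w^*$ by stationarity of the separable quadratic pre-Hamiltonian with clamping to $\mathbf U$, $\mathbf W$, and re-use of the previously derived forward Kolmogorov equation (5) as the consistency condition on $m$. The only difference is one of exposition: you write out the DPP, the generator expansion, and the Isaacs interchange $\min\max=\max\min$ explicitly, whereas the paper compresses these into citations and states the clamped optimizers directly.
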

\begin{proof}
As we have employed a time-consistent formulation of the mean-field game \cite{bensoussan2013mean}, we apply the standard procedure of stochastic dynamic programming  (with a slightly different version of It\^o's lemma \cite{watanabe1971stochastic}) to obtain (in addition to (\ref{measures})) the Hamilton-Jacobi-Bellman-Isaacs (HJB-I) equation:
\begin{equation}
\begin{split}
\partial_t \mathcal V + H(\xi,\upsilon, m, \nabla_{\xi,\upsilon}\mathcal V) + 
\epsilon \partial_\upsilon^2 \mathcal V = 0 \text{ in }  D_T^{\mathcal V};
\end{split}
\end{equation}
\begin{equation}
\begin{split}
H(\xi,\upsilon,m,p):= &\min_{u \in \mathbf U} \max_{w \in \mathbf W} \mathcal H(\xi, \upsilon, u, w, m,p),
\end{split}
\end{equation}
subject to the given boundary conditions. See \cite{watanabe1971stochastic} for justification of the Neumann conditions in the $\upsilon$ co-ordinate.

The optimal control and worst-case disturbance are:
$$
u^*(p_2):= \begin{cases} 
-p_2 &\text{ if } -p_2 \in [u_{min}, u_{max}] \\
 \bar u &\text{ else },
\end{cases}
$$
$$
w^*(p_2):= \begin{cases} 
\gamma^2 p_2 &\text{ if } \gamma^2 p_2 \in [-w_{max}, w_{max}] \\
\bar w &\text{ else },
\end{cases}
$$
where $$\bar u := \arg \min_{u \in \{u_{min}, u_{max} \}} \mathcal H(\cdots),$$ $$\bar w:= 
\arg \max_{w \in \{-w_{max},w_{max} \}} \mathcal H(\cdots),$$
which can be found by computing the parts of $\mathcal H$ which depend on $u,w$ on the boundaries of the control and disturbance sets and comparing the values. As there are only two points to check for each (being scalars), this computation is trivial.
\end{proof}

There are results on existence and uniqueness for the mean-field game system in the deterministic state-constrained and periodic cases separately \cite{cannarsa2021mean}, and in the stochastic state-constrained non-degenerate case \cite{bensoussan2013mean}. To the knowledge of the authors, there are no results for the (robust) mean-field game system with mixed boundary conditions and degenerate diffusion. For now, we proceed assuming a suitable solution exists, and intend to explore this problem further.
\section{Numerical Solution of the HJB-I-FK System}
Suppose that the density of $m(\cdot)$ w.r.t. the Lebesgue measure is $\rho:[0,T] \times \mathbb T \times [0,s_{max}] \rightarrow \mathbb R^+_0$. From the measure-valued formulation of the forward Kolmogorov equation for $m$, we obtain:
\begin{equation}
\begin{split}
\label{measures}
&\partial_t \rho + \nabla_{\xi,\upsilon} \cdot [(\upsilon, -\alpha \upsilon^2 + u^*+ w^*)  \rho ] =  \epsilon \partial_\upsilon^2 \rho, \\
& \rho(t, \xi,\upsilon) = \rho(t, \xi + 200 \pi,\upsilon), \\
& \partial_\upsilon \rho(t,\xi,0) = \partial_\upsilon \rho(t,\xi,s_{max}) = 0 \\
& \rho(0,\cdot) = \rho_0 := dm_0/d\lambda, \int_{\mathbb T \times [0,s_{max}]} \rho(t,\cdot) d\xi d\upsilon = 1
\end{split} 
\end{equation}
which is, of course, to be interpreted in the weak formulation. This equation together with HJB-I are the equations whose solutions we approximate using the procedure below.  Define a grid of points (with the same number of points in each direction) $\{(\xi_i,\upsilon_j) \}_{i,j=1}^{N_x,N_x}$, where 
\begin{equation}
\begin{split}
&\xi_{i+1} - \xi_i = h, \upsilon_{j+1} - \upsilon_{j} = k, \\
&\xi_0 = 0, \xi_{N_x-1} = 200 \pi - h, \\
&\upsilon_0 = k/2, \upsilon_{N_x-1} = s_{max} - k/2
\end{split} 
\end{equation} 

Define sequence of cells of area $hk$ centered on $(\xi_i, \upsilon_j)$ called $\{I_{i,j}\}_{i,j=1}^{N_x,N_x}$. These cells are staggered in the $\upsilon$ direction, and unstaggered in $\xi$. Before we proceed, let us also define the following difference operators for a function $f_{i,j} := f(\xi_i,\upsilon_j)$ defined on the given grid of points:
$$
(D_1^{+}f)_{i,j}:= \frac{f_{i+1,j} - f_{i,j}}{h}, (D_2^{+}f)_{i,j}:= \frac{f_{i,j+1} - f_{i,j}}{k}
$$
which is the upwind difference operator, and similarly:
$$
(D_1^{-}f)_{i,j}:= \frac{f_{i,j} - f_{i-1,j}}{h}, (D_2^{-}f)_{i,j}:= \frac{f_{i,j} - f_{i,j-1}}{k}
$$
which is the downwind operator. We also define:
\begin{equation}
\label{sod}
(D^2_2 f)_{i,j}:= \frac{(D_2^{+}f)_{i,j} - (D_2^{-}f)_{i,j}}{k}
\end{equation}
which is the centered second-order difference operator. Let the approximate value function be $\hat{\mathcal V}_{i,j}(t):= \hat{\mathcal V}(t,\xi_i, \upsilon_j),$ and the approximate density be $\hat \rho_{i,j}(t) =  \hat \rho(t, \xi_i, \upsilon_j)$. We first dicretize space to obtain a semi-discrete system, and then briefly describe the time-discretization and fixed-point procedure to solve the backward-forward PDE system.
\subsection{Finite Difference Approximation of HJB-I}
To approximate the solution of the HJB-I equation, we must discretize the Hamiltonian $H$ in a way that is consistent with its continuous properties. Let $\mathbf H$ be the numerical Hamiltonian. The choice of $\mathbf H$ should satisfy the monotonicity, consistency, and regularity assumptions as in \cite{osher1991high, achdou2020mean}. The first-order upwind Hamiltonian satisfies the desired properties \cite{botkin2011stable, achdou2010mean}, and is used in a similar problem to our own \cite{chevalier2015micro}. Suppose $\hat m(\cdot)$ has the approximate density $\hat \rho$ w.r.t. the Lebesgue measure. The upwind numerical Hamiltonian is:
$$
\mathbf H_{i,j}(t) = H(\xi_i, \upsilon_j, \hat m(t), (D^+_1 \hat{\mathcal V}, D^+_2 \hat{\mathcal V})_{i,j}(t)).
$$
Combining this with the second-order difference formula, we obtain the semi-discrete scheme for the HJB-I equation:
\begin{equation}
\frac{d}{dt}\hat{\mathcal V}_{i,j}(t) = - \mathbf H_{i,j}(t) - \epsilon (D^2_2 \hat{\mathcal V})_{i,j}(t)
\end{equation}
subject to the terminal condition $\hat{\mathcal V}_{ij}(T) = 0$. The periodic and homogeneous Neumann conditions are implemented in the usual way using ghost nodes \cite{leveque2002finite, achdou2020mean}. This is solved backward in time.
\subsection{Finite Volume Approximation of FK}
As with the HJB-I equation, there are many ways to discretize the forward Kolmogorov equation.  We desire a method which preserves monotonicity and non-negativity of the discretized probability density. The second requirement we consider to be particularly important. We employ the first-order Rusanov method on the hyperbolic part, which satisfies the desired properties \cite{zhang2017positivity}. On the parabolic part, we use (\ref{sod}). We integrate the forward Kolmogorov equation over $I_{i,j}$:
\begin{equation}
\begin{split}
&\int_{I_{i,j}} \partial_t \rho d\xi d\upsilon = ... \\
& \int_{I_{i,j}} - \nabla_{\xi,\upsilon} \cdot [(\upsilon, -\alpha \upsilon^2 + u^* + w^*)\rho ] + \epsilon \partial_v^2 \rho \text{ }d\xi d\upsilon. 
\end{split}
\end{equation}
Now, we will approximate the function $\rho$ by $\hat \rho$ which is assumed to be piecewise constant in each $I_{i,j}$. The equation now becomes:
\begin{equation}
\begin{split}
\label{flux}
& \frac{d}{dt}\hat \rho_{i,j}(t) = - \frac{1}{h}[g^1_{i+1/2,j}(\hat \rho(t)) - g^1_{i-1/2,j}(\hat \rho(t))] - ... \\ 
& \frac{1}{k}[g^2_{i,j+1/2}(\hat \rho(t)) - g^2_{i,j-1/2}(\hat \rho(t))] + \epsilon (D^2_2 \hat \rho(t))_{i,j}.
\end{split}
\end{equation}
subject to the initial condition: $\hat \rho_{ij}(0) = \rho_0(\xi_i,\upsilon_j)$. This is solved forward in time.
The Rusanov (local Lax-Friedrichs) fluxes \cite{leveque2002finite} are:
$$
g^1_{i \pm 1/2, j}(\hat \rho):= \frac{1}{2}(\upsilon_j\hat \rho_{i,j} + \upsilon_j\hat \rho_{i \pm 1,j} - |\upsilon_j|(D^\pm_1 (\rho))_{i,j})
$$
\begin{equation*}
\begin{split}
g^2_{i , j\pm 1/2}(\hat \rho)&:= \frac{1}{2}(\psi_{i,j}\hat \rho_{i,j} + \psi_{i, j \pm 1}\hat \rho_{i ,j \pm 1 } - ... \\ 
&max_{\pm}(|\psi_{i, j \pm 1}| ) (D^\pm_1 (\hat \rho))_{i,j}),
\end{split}
\end{equation*}
where
$$
\psi_{i,j} = \psi(\xi_i,\upsilon_j,\ u^*_{i,j}, w^*_{i,j}):= -\alpha \upsilon_j^2 +u^*_{i,j} + w^*_{i,j}.
$$
We have the following result:
\begin{proposition}
Consider the grid of points previously defined $\{(\xi_i, \upsilon_j)\}_{i,j=1}^{N_x, N_x}$, and append the ghost points $\{(\xi_i, \upsilon_{0}) \}_{i=1}^{N_x}$, $\{(\xi_i, \upsilon_{N_x + 1} )\}_{i=1}^{N_x}$, $\{(\xi_i, \upsilon_{0} )\}_{i=1}^{N_x}$, $\{(\xi_i, \upsilon_{N+1} )\}_{i=1}^{N_x}$ to the grid, where $\xi_0 = -h$, $\xi_{N_x + 1} = 200\pi$, 
$\upsilon_0 = -k/2$, $\upsilon_{N_x + 1} = s_{max} + k/2$. If $\hat \rho$ is extended to the ghost nodes in $i$ as:
$$
\hat \rho_{0,j}(t) = \hat \rho_{N_x,j}(t), \hat \rho_{N_x + 1, j}(t) = \hat \rho_{1, j}(t),
$$
which is the extension corresponding to the assumption: $\hat \rho_{i + N_x, j}(t) = \hat \rho_{i,j}(t)$, and extended to the ghost nodes in $j$ as:
$$
\hat \rho_{i,0}(t) = \hat \rho_{i,1}(t), \hat \rho_{i, N_x + 1}(t) = \hat \rho_{i, N_x}(t),
$$
which correspond to even extensions about $\upsilon_{1/2} = 0$, and  $\upsilon_{N_x+1/2} = s_{max}$, and if:
$$
\psi_{i,0} = - \psi_{i,1}, \psi_{i, N_x + 1} = - \psi_{i, N_x},
$$
then, the total mass is conserved:
$$
\frac{d}{dt}\sum_{i,j=1}^{N_x,N_x} \hat \rho_{i,j}(t) = 0.
$$
\end{proposition}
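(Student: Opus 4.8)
The plan is to exploit the conservative flux-difference form of the semi-discrete density update \eqref{flux} and reduce the claim to a one-dimensional telescoping argument in each coordinate: summing over all cells, every interior interface flux appears twice with opposite sign and cancels, so only the fluxes through the two boundary interfaces in $\xi$ and the two in $\upsilon$ survive, and the ghost-node prescriptions are precisely what is needed to annihilate those. First I would sum $\frac{d}{dt}\hat\rho_{i,j}$ over all $i,j\in\{1,\dots,N_x\}$ and treat the three right-hand-side contributions separately: the $\xi$-flux difference, the $\upsilon$-flux difference, and the diffusion term.

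For the $\xi$-direction I would fix $j$ and sum $g^1_{i+1/2,j}-g^1_{i-1/2,j}$ over $i$. Because the Rusanov flux is single-valued at each interface, i.e. $g^1_{i-1/2,j}$ coincides with the $(i-1)+1/2$ interface flux of the neighbouring cell, the sum telescopes to $g^1_{N_x+1/2,j}-g^1_{1/2,j}$. Substituting the periodic extension $\hat\rho_{0,j}=\hat\rho_{N_x,j}$, $\hat\rho_{N_x+1,j}=\hat\rho_{1,j}$, the advective part $\tfrac12\upsilon_j(\hat\rho_{N_x,j}+\hat\rho_{1,j})$ and the viscosity increment $\tfrac12|\upsilon_j|(\hat\rho_{1,j}-\hat\rho_{N_x,j})/h$ come out identical at the two boundary interfaces, so $g^1_{N_x+1/2,j}=g^1_{1/2,j}$ and this contribution vanishes.

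For the $\upsilon$-direction I would fix $i$ and telescope $g^2_{i,j+1/2}-g^2_{i,j-1/2}$ over $j$ to $g^2_{i,N_x+1/2}-g^2_{i,1/2}$, the fluxes through the physical boundaries $\upsilon=0$ and $\upsilon=s_{max}$. Here the even extension $\hat\rho_{i,0}=\hat\rho_{i,1}$, $\hat\rho_{i,N_x+1}=\hat\rho_{i,N_x}$ forces the difference increment $(D_2^{\pm}\hat\rho)$ at each boundary interface to be zero, while the odd extension $\psi_{i,0}=-\psi_{i,1}$, $\psi_{i,N_x+1}=-\psi_{i,N_x}$ makes the advective pair $\psi_{i,j}\hat\rho_{i,j}+\psi_{i,j\pm1}\hat\rho_{i,j\pm1}$ cancel identically; hence each boundary flux is exactly zero. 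The diffusion term is handled the same way: writing $(D^2_2\hat\rho)_{i,j}=\tfrac1k[(D_2^{+}\hat\rho)_{i,j}-(D_2^{+}\hat\rho)_{i,j-1}]$ and summing over $j$ telescopes to $\tfrac1k[(D_2^{+}\hat\rho)_{i,N_x}-(D_2^{+}\hat\rho)_{i,0}]$, both of which vanish by the even extension. Collecting the three zero contributions yields $\frac{d}{dt}\sum_{i,j}\hat\rho_{i,j}=0$.

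The computation is essentially bookkeeping, so the only genuinely delicate point, which I regard as the main obstacle, is verifying that the \emph{numerical-viscosity} parts of the Rusanov fluxes (not merely the physical advective parts) also vanish at the boundaries; this is where the even extension of $\hat\rho$, encoding the homogeneous Neumann / no-flux condition, and the odd extension of $\psi$, encoding the sign flip of the normal drift at the reflecting boundary, must be used in tandem. I would also double-check the index and staggering conventions so that $g^1_{i-1/2,j}$ and $g^2_{i,j-1/2}$ genuinely coincide with the neighbouring cells' interface fluxes, since that single-valuedness is exactly what legitimizes the telescoping and hence the discrete conservation of mass.
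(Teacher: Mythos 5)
Your proof is correct and takes essentially the same route as the paper's: both telescope the conservative flux differences down to the boundary-interface terms, then use the periodic extension to cancel the $\xi$-fluxes, the even extension of $\hat\rho$ to annihilate the diffusion and numerical-viscosity contributions, and the odd extension of $\psi$ combined with the even extension of $\hat\rho$ to make the advective part of each $\upsilon$-boundary flux vanish. If anything, your write-up is slightly more explicit than the paper's (verifying $g^1_{N_x+1/2,j}=g^1_{1/2,j}$ term by term and telescoping the diffusion term), but the decomposition and the role of each ghost-node prescription are identical.
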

\begin{proof}
The Rusanov flux is well-known to be conservative \cite{leveque2002finite}. Applying the given sum to the RHS of (\ref{flux}) yields:
\begin{equation*}
\begin{split}
&\frac{d}{dt}\sum_{i,j=1}^{N_x,N_x} \hat \rho_{i,j}(t)  = \sum_{j=1}^{N_x} -\frac{1}{h}[g^1_{1/2, j} + g^1_{N_x + 1/2, j}] + ... \\
& \sum_{i=1}^{N_x} -\frac{1}{k}[g^2_{i, 1/2} - g^2_{i, N_x + 1/2}] + ... \\
&\frac{1}{k}\Big [\frac{\hat \rho_{i,0} - \hat \rho_{i,1}}{k} - 
\frac{\hat \rho_{i,N_x + 1} - \hat \rho_{i,N_x}}{k} \Big ] .
\end{split}
\end{equation*}
Immediately, from the selection of the values of $\hat \rho$ on the ghost nodes,  the last term drops out, and if we inspect the structure of $g^1_{(\cdot),j}$, we see clearly that the terms containing it also drop out from the selection of the ghost values of $\hat \rho$. With respect to the terms containing $g^2_{i, (\cdot)}$, we inspect its definition and conclude that the final term drops out for both of the grid points due to the ghost values of $\hat \rho$. So, we are left with:
\begin{equation*}
\begin{split}
&\frac{d}{dt}\sum_{i,j=1}^{N_x,N_x} \hat \rho_{i,j}(t)  = \sum_{i=1}^{N_x} \Big (-\frac{1}{k}[g^2_{i, 1/2} - g^2_{i, N_x + 1/2}] \Big ) \\
& = \sum_{i=1}^{N_x} -\frac{1}{2k}[(\psi_{i, 0}\hat \rho_{i, 0} + \psi_{i,1}\hat \rho_{i, 1}) - ... \\ 
&(\psi_{i, N_x}\hat \rho_{i, N_x} + \psi_{i,N_x + 1}\hat \rho_{i, N_x + 1})] \\
&=  \sum_{i=1}^{N_x} -\frac{1}{2k}[(-\psi_{i, 1}\hat \rho_{i, 1} + \psi_{i,1}\hat \rho_{i, 1}) - ... \\ 
&(\psi_{i, N_x}\hat \rho_{i, N_x} - \psi_{i,N_x}\hat \rho_{i, N_x})] = 0.
\end{split}
\end{equation*}
Thus, the claim is proven.
\end{proof}

\subsection{Iterative Solution of Backward-Forward System}
\begin{algorithm}
\caption{Backward-Forward Iteration}
\begin{algorithmic}
\label{fwd_bkwd}
\Require $\rho_0, \mathcal V_T, n_{iters}$
\Ensure $\hat{\mathcal V}^*, \hat{\rho}^*$
\For {$\theta=1,N_T$}
	\State $\hat{\rho}^{-1}(t_\theta,\cdot) \gets \rho_0$
\EndFor
\State $\hat{\mathcal V}^0 \gets \text{solveHJB-I}(\hat \rho^{-1}, \mathcal V_T)$
\State $\hat \rho^{0} \gets \text{solveFK}(\hat{\mathcal V^{0}}, \rho_0)$
\State $D \gets 0$
\For{$n = 1, n_{iters}$}
\State $\hat{\mathcal V}^{n} \gets \text{solveHJB-I}(\hat \rho^{n-1}, \mathcal V_T)$
\State $\hat \rho^{n} \gets \text{solveFK}(\hat{\mathcal V^{n}}, \rho_0)$
\State $\delta^2_n \gets  \text{computeDelta2}(\hat{\mathcal V}^n, \hat{\rho}^n, \hat{\mathcal V}^{n-1}, \hat{\rho}^{n-1})$
\If{$D + \delta^2_n = D$ (up to double precision)}
\State \Return $\hat{\mathcal V}^* \gets \hat{\mathcal V}^n, \hat{\rho}^* \gets \hat{\rho}^n$
\Else
\State $D \gets D + \delta_n^2$
\EndIf
\EndFor
\State \Return $\hat{\mathcal V}^* \gets \hat{\mathcal V}^{n_{iters}}, \hat{\rho}^* \gets \hat{\rho}^{n_{iters}}$
\end{algorithmic}
\end{algorithm}
In addition to the spatial grid, define a sequence of time points $\{t_\theta \}_{\theta=1}^{N_T}$, taken such that $t_{\theta+1} - t_{\theta} = \tau$.
We time-discretize the semi-discrete systems we obtained using the stability preserving second-order Runge-Kutta method \cite{shu1988efficient}. The fixed temporal spacing was taken to be:
$$
\tau << .01(h s_{max} + k (\alpha s_{max}^2 + \max \{|u_{min},u_{max} \} + w_{max}))
$$
so as to satisfy the CFL condition robustly on both the HJB and FK equations, albeit at the cost of some efficiency.  
Similarly to \cite{chevalier2015micro}, we define:
\begin{equation}
\begin{split}
&\delta_n^2:=  ... \\ 
&\sum_{\theta = 1}^{N_T} \sum_{i,j=1}^{N_x,N_x}||(\hat{\mathcal V}, \hat \rho)_{i,j}^n(t_\theta) - (\hat{\mathcal V}, \hat \rho)_{i,j}^{n-1}(t_\theta)||^2_2 hk\tau,
\end{split}
\end{equation}
where $(\hat{\mathcal V}, \hat \rho)^n$ is the solution of the discretized PDE system after the $n-$th backward-forward iteration described in Algorithm 1., which is subject to the following convergence criterion:
\begin{theorem}\cite{chevalier2015micro}
If
$$
\sum_{n=1}^\infty \delta_n^2 < \infty,
$$
\end{theorem}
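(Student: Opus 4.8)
The plan is to read the truncated conclusion as the statement that Algorithm~\ref{fwd_bkwd} meets its convergence criterion --- i.e.\ terminates through the \texttt{If} branch --- after finitely many backward-forward iterations, since the test $D + \delta_n^2 = D$ \emph{up to double precision} is exactly a numerical convergence check on the partial sums of the series $\sum_n \delta_n^2$. Accordingly, I would establish finite termination by combining elementary facts about convergent non-negative series with the rounding model of IEEE double-precision arithmetic.

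First I would observe that each $\delta_n^2 \ge 0$, so the exact partial sums $D_n := \sum_{k=1}^n \delta_k^2$ are monotone non-decreasing and, by the hypothesis $\sum_{n=1}^\infty \delta_n^2 < \infty$, bounded above; hence $D_n \nearrow D_\infty$ for some finite limit $D_\infty \ge 0$, and in particular the general term satisfies $\delta_n^2 \to 0$. Next I would split into two cases. If $D_\infty = 0$ then every $\delta_n = 0$, the consecutive iterates agree, and the test $D + \delta_1^2 = D$ fires at the first pass. If $D_\infty > 0$, I would invoke the unit roundoff $u$ of double precision: for a stored positive value $D$ and a non-negative increment $x$, the rounded sum satisfies $\mathrm{fl}(D + x) = D$ whenever $x$ falls below the representable gap at $D$, i.e.\ roughly when $x < u\,D$. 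Since $D_{n-1} \to D_\infty > 0$ while $\delta_n^2 \to 0$, for all large $n$ we have $\delta_n^2 < u\,D_{n-1}$, so $\mathrm{fl}(D_{n-1} + \delta_n^2) = D_{n-1}$ and the algorithm returns; thus the \texttt{If} branch is taken at some finite $n$, provided $n_{\mathrm{iters}}$ is chosen large enough that this happens before the loop is exhausted.

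The main obstacle is making the floating-point step rigorous rather than heuristic: one must fix a concrete rounding model (round-to-nearest IEEE~754) and argue that the accumulated $D$ actually computed in finite precision stays uniformly bounded away from zero and close enough to the exact $D_\infty$ that the threshold $\delta_n^2 < u\,D$ is genuinely crossed, accounting for the rounding errors already committed while forming $D$ itself. A cleaner alternative, which sidesteps machine-specific reasoning entirely, is to replace the floating-point test by an explicit relative tolerance $\delta_n^2 < \eta\,D$ for some fixed $\eta > 0$ and prove termination for that criterion; it then follows immediately from $\delta_n^2 \to 0$ together with $D_n \to D_\infty > 0$ by the same comparison, with no appeal to the arithmetic of the machine.
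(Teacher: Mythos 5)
You have proved a different statement from the one the paper intends. The theorem's conclusion is not missing by accident of mathematics, only of typesetting: in the paper's source the words ``then there is a pair $(\hat{\mathcal V}, \hat \rho)^*$ s.t.\ $(\hat{\mathcal V}, \hat \rho)^{(n)} \rightarrow (\hat{\mathcal V}, \hat \rho)^*$'' appear immediately after the \verb|\end{theorem}|, and the paper's proof is a one-line Cauchy argument: the quantity $\delta_n$ is a (weighted) norm of the difference of consecutive iterates, summability is used to conclude the sequence of iterates $(\hat{\mathcal V}, \hat \rho)^{n}$ is Cauchy via the triangle inequality, and the limit $(\hat{\mathcal V}, \hat \rho)^*$ exists by completeness of the finite-dimensional space in which the discretized pair lives. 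Your proposal instead establishes finite termination of Algorithm~1 under an IEEE round-to-nearest model, i.e.\ a property of the stopping test $D + \delta_n^2 = D$. These claims are logically independent: the loop can halt (because the increment has fallen below the representable gap at the accumulated $D$) without the iterates converging to anything, and it is convergence of the iterates --- not termination of the loop --- that licenses calling the returned pair $(\hat{\mathcal V}^*, \hat\rho^*)$ an approximate solution of the HJB-I/FK system. So even granting your floating-point analysis in full rigor, the theorem as the paper states and uses it remains unproved.

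Within your own reading the two-case argument ($D_\infty = 0$ versus $D_\infty > 0$) is sensible, and your suggested replacement of the machine test by a relative tolerance $\delta_n^2 < \eta D$ is the right way to make such a termination claim clean. But to match the paper you would need to redirect the hypothesis entirely: from $\sum_n \delta_n^2 < \infty$ one wants to conclude that $(\hat{\mathcal V}, \hat\rho)^n$ is Cauchy in the norm whose increments the $\delta_n$ measure, then invoke completeness. It is worth noting that this step is itself delicate --- square-summability gives $\delta_n \to 0$ but not $\sum_n \delta_n < \infty$ (take $\delta_n = 1/n$), and the triangle-inequality bound $\| (\hat{\mathcal V},\hat\rho)^m - (\hat{\mathcal V},\hat\rho)^n \| \le \sum_{k=n+1}^{m} \delta_k$ needs the latter --- so the paper's own proof, inherited from the cited reference, is terse at best. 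That, rather than the behavior of double-precision addition, is the issue a proof of this theorem has to confront.
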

then there is a pair $(\hat{\mathcal V}, \hat \rho)^*$ s.t. 
$
(\hat{\mathcal V}, \hat \rho)^{(n)} \rightarrow (\hat{\mathcal V}, \hat \rho)^*.
$
\begin{proof}
The proof follows using the triangle inequality and invoking the completeness of finite dimensional real vector spaces \cite{chevalier2015micro}.
\end{proof}

\begin{figure}
\label{fig:bulk_vel}
\centering
\includegraphics[width=.5\textwidth, trim={0 10 0 20}]{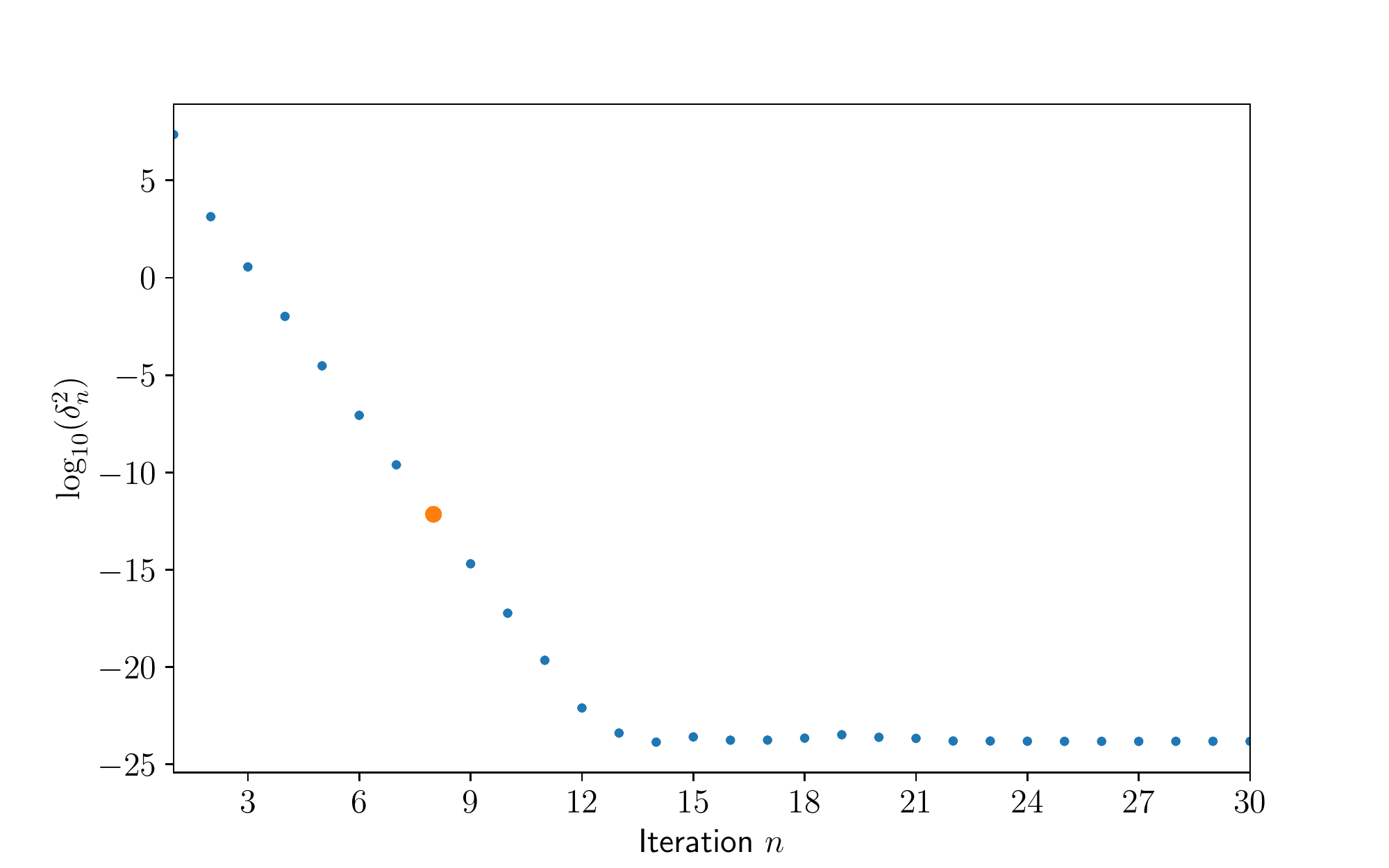}
\caption{The values of $\log_{10}(\delta_n^2)$ for the given numerical example. The orange point denotes the iteration at which the summed error ceases to change, up to double precision. For reference, we continue the iteration up to thirty backward-forward iterations.}
\end{figure}
\subsection{Model Parameters}
$\alpha, u_{max}, u_{min}$ are based on parameters for high-performance electric vehicles available in the US. $w_{max}, \epsilon$ were taken numerically to be twenty percent and half a percent, respectively, of $|u_{min}| > u_{max}$. $\alpha:= \frac{1}{2}\rho_{air}Ac_dm_{veh}^{-1} = \frac{1}{2}(1.2 \text{kg }\text{m}^{-3})(2.16 \text{m}^2)(.3)(1800 \text{ kg})^{-1} = 2.16 \cdot 10^{-4} \text{m}^{-1}$. $\epsilon = .05 \text{m}^2 \text{s}^{-1}$, $u_{max} = 8 \text{ m } \text{s}^{-2}$, $u_{min} = -10 \text{ m } \text{s}^{-2}$, $w_{max} = 2 \text{ m } \text{s}^{-2}$, $s_{max} = 30 \text{ m } \text{s}^{-1}$. $\gamma=.25$ and is unitless. We took $\beta = 4 \text{ s}^{3} \text{ m}^{-1}$, and $T= 30 \text{ s}$.

The congestion kernel $\phi$ is taken to be:
$$
\phi(\xi,\sigma):= \frac{1}{100}e^{\cos{((\xi-\sigma)/100)}} \text{ m } \text{s}^{-3},
$$
and the intial (unitless) position-velocity probability density:
$$
\rho_0(\xi,\upsilon):= \frac{1}{C}e^{\cos((\xi-100\pi)/100)}e^{-\frac{1}{2}(\upsilon - 20)^2}
$$
where $C$ is a normalizing constant. The initial density can be seen to be a product of a von Mises distribution and a variety of truncated normal distribution. As noted previously, $\mathcal V(T,\cdot) = 0$. For the numerical example we solve, the convergence of the backward-forward iterative method is displayed in Fig. 1.
\subsection{Simulation Parameters}
We take $N_x = 100$ for a $100 \text{x} 100$ grid staggered as described in $\upsilon$ and unstaggered in $\xi$. The time-spacing was taken to be $\tau = .001 \text{ s}$, the spatial increment was $h:= 200\pi/N_x \text{ m}$, and the velocity increment was $k := s_{max}/N_x \text{ m } \text{s}^{-1}$. The simulation was performed using a parallelized Fortran program called from Python.

\section{Results}
\begin{figure}
\centering
\includegraphics[width=.5\textwidth, trim={0 0 0 20}]{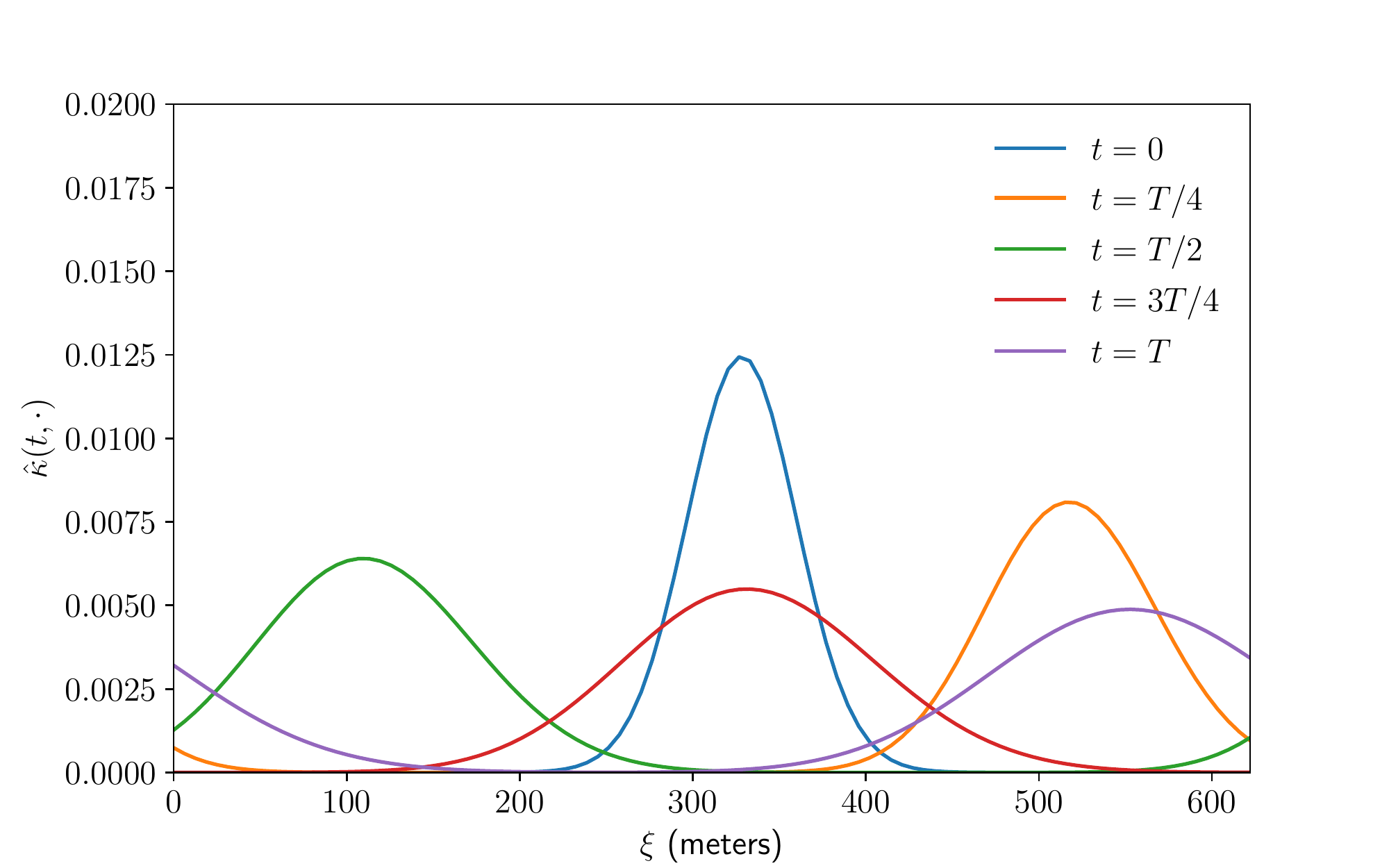}
\centering
\includegraphics[width=.5\textwidth, trim={0 10 0 34}]{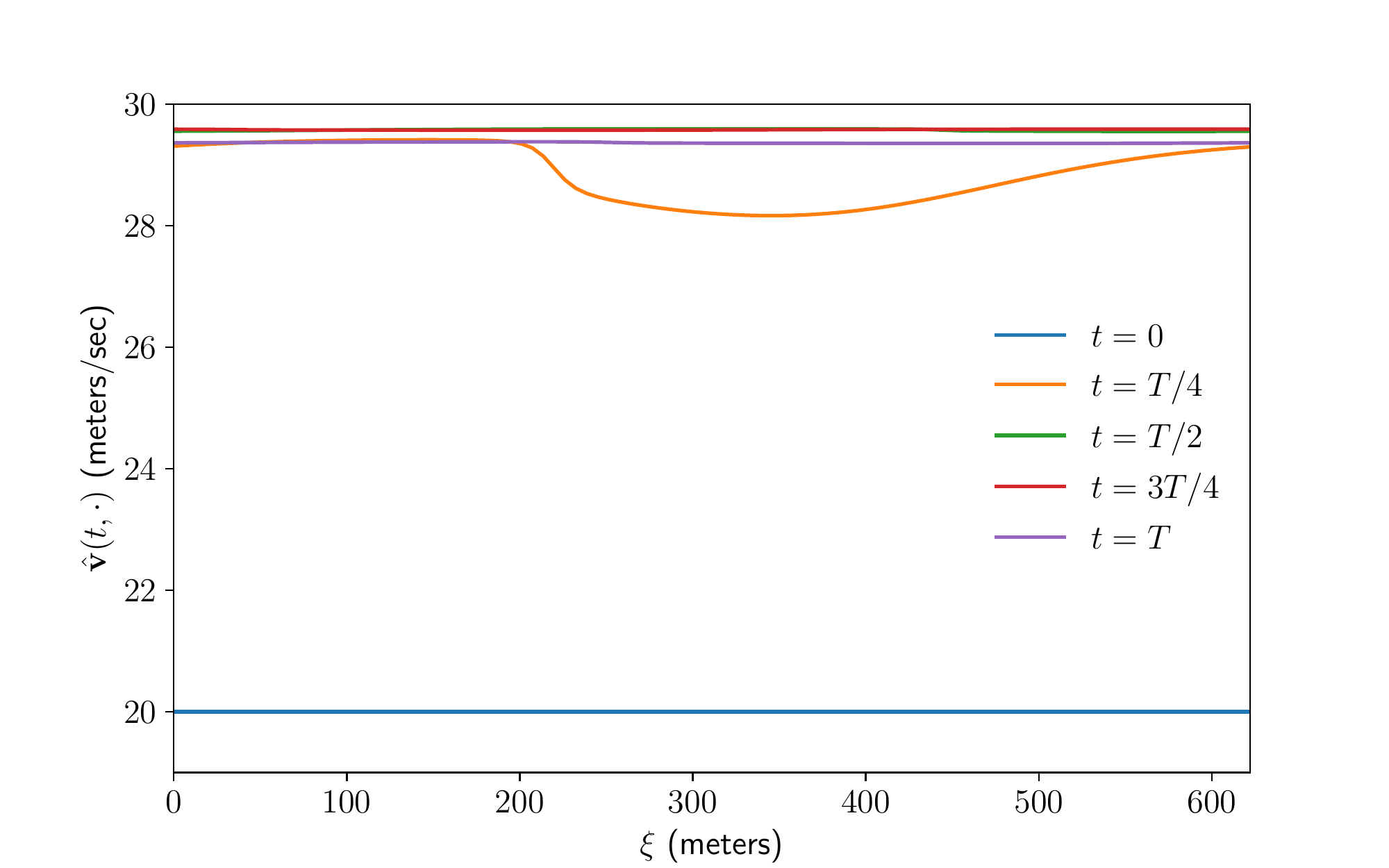}
\caption{(top) The spatial marginal probability distribution of the traffic. (bottom) The bulk velocity of the traffic flow. As the optimal control is applied, the congestion is dissipated. A slowdown region is created as the traffic approaches the congested region to rapidly dissipate congestion. Then, the slowdown region is itself dissipated. }
\end{figure}
We have approximated the density and associated value function of the agents as they follow the optimal control and worst-case disturbance. To interpret the results, we will motivate them by partially passing to an hydrodynamic limit of the kinetic equation.
Define the spatial marginal probability density $\kappa:[0,T] \times \mathbb T \rightarrow \mathbb R_0^+$, and the momentum density $\mathbf j:[0,T] \times \mathbb T \rightarrow \mathbb R$:
$$
\kappa (t,\cdot):= \int_\mathbb R \rho(t,\cdot) d\upsilon,
$$
$$
\mathbf j(t,\cdot):= \int_{\mathbb R} \upsilon \rho(t,\cdot) d\upsilon  =: \kappa (t,\cdot) \mathbf v(t,\cdot),
$$
assuming that $\rho(t,\cdot)$ is extended by $0$ for $\upsilon \notin [0,s_{max}]$.
Integrating the forward Kolmogorov equation over $\upsilon \in \mathbb R$, we obtain:
$$
\partial_t \kappa + \partial_\xi \mathbf (\kappa \mathbf v ) = 0 \text{ in } (0,T] \times \mathbb T
$$
subject to $\kappa(t,\xi) = \kappa(t,\xi + 200 \pi)$, and initial conditions obtained by integrating $\rho_0$. $\mathbf j$ appears as the first moment of the velocity from the forward Kolmogorov equation.  We compute approximate bulk velocity $\hat {\mathbf v}(t,\cdot)$ numerically (as a Riemann sum) from $\hat \rho(t,\cdot)$, and plot it in Fig. 2 (bottom). We also plot the spatial marginal probability density $\hat \kappa$ in Fig. 2 (top) also computed as a Riemann sum. $\hat{\mathbf v}$ is evolved in time so that the vehicles slow down before the congested region. This allows the vehicles sufficiently ahead of the congested region to speed up and spread into the rarefied regions of the road. As this process continues, the bulk velocity becomes more regular. Although the vehicles display a strong tendency to speed up, the drag and disturbances they are subject to prevents the bulk velocity from exactly matching the speed limit.

The smoothing of the bulk velocity is desireable as it indicates that there is not excessive acceleration or braking. This translates to fuel efficient and comfortable travel. 
\newline
\section{Conclusion}

We have developed and solved numerically a robust mean-field game on traffic flow. There are a number of avenues oringinating from this idea to explore. First, we intend to explore the $N$-player game and show  that the optimal control obtained herein is $\epsilon(N)$-optimal for the finite size game, with $\epsilon(N)$ monotonically decreasing with the number of players.

There is also the ergodic problem for this mean-field game. Apart from the mathematical aspects, the ergodic problem lends itself well to reinforcement learning. Then, one could obtain a congestion-dissipating adaptive control for the forward Kolmogorov equation online.

\bibliographystyle{IEEEtran}
\bibliography{biblio}

\end{document}